\newtheorem{theorem}{Theorem}
\newtheorem{lemma}[theorem]{Lemma}
\newtheorem{definition}[theorem]{Definition}
\newtheorem{corollary}[theorem]{Corollary}
\theoremstyle{remark}
\newtheorem{remark}[theorem]{Remark}
\newcommand{\p}{\vskip .4cm}
\newcommand{\Z}{\mathbb{Z}}
\newcommand{\Q}{\mathbb{Q}}
\newcommand{\R}{\mathbb{R}}
\newcommand{\C}{\mathbb{C}}
\newcommand{\Cc}{\C^{\times}}
\newcommand{\cW}{\mathcal{W}}
\newcommand{\cO}{\mathcal{O}}
\newcommand{\cN}{\mathcal{N}}
\newcommand{\bG}{\mathbb{G}}
\newcommand{\Gm}{\mathbb{G}_m}
\newcommand{\what}{\widehat}
\newcommand{\ra}{\rightarrow}
\newcommand{\ira}{\hookrightarrow}
\newcommand{\sra}{\twoheadrightarrow}
\newcommand{\xra}{\xrightarrow}
\newcommand{\Lie}{\operatorname{Lie}}
\newcommand{\depth}{\operatorname{depth}}
\newcommand{\Ad}{\operatorname{Ad}}
\newcommand{\Irr}{\operatorname{Irr}}
\newcommand{\fg}{\mathfrak{g}}
\newcommand{\fp}{\mathfrak{p}}
\newcommand{\fm}{\mathfrak{m}}
\newcommand{\Hom}{\operatorname{Hom}}
\newcommand{\cha}{\operatorname{char}}
\newcommand{\GL}{\mathrm{GL}}
\newcommand{\SL}{\mathrm{SL}}
\newcommand{\meas}{\operatorname{meas}}
\newcommand{\nild}{(\fg^*)^{\mathrm{nil}}/\!\!\sim}
\begin{document}

\title{Local characters of mod-$\ell$ representations of a $p$-adic reductive group}

\author{Cheng-Chiang Tsai}
\thanks{The author is supported by Taiwan NSTC grants 114-2115-M-001-009 and 114-2628-M-001-003.}
\address{Institute of Mathematics, Academia Sinica, 6F, Astronomy-Mathematics Building, No. 1,
Sec. 4, Roosevelt Road, Taipei, Taiwan \vskip.2cm
also Department of Applied Mathematics, National Sun Yat-Sen University, and Department of Mathematics, National Taiwan University}

\email{chchtsai@gate.sinica.edu.tw}

\begin{abstract} 
    We define the ``lifted character'' of mod-$\ell$ representations of $p$-adic reductive groups where $\ell\not=p$, on compact elements with pro-orders not divisible by $\ell$. We generalize the local character expansion results of Howe, Harish-Chandra and DeBacker to such lifted characters. We show that the result of M\oe glin--Waldspurger and Varma on degenerate Whittaker models is valid for the character expansion.
\end{abstract}

\makeatletter
\let\@wraptoccontribs\wraptoccontribs
\makeatother

\maketitle

\tableofcontents

\section{The Harish-Chandra--Howe local character expansion}

Let $F$ be a non-archimedean local field with residual characteristic $p$. Let $\bG$ be a connected reductive group over $F$ and $G=\bG(F)$. Fix once and for all a Haar measure on $G$ for which $\meas(H)\in p^{\Z}$ for any open pro-$p$ subgroup $H\subset G$. Consequently $\meas(J)\in\Z[1/p]$ for any open compact subgroup $J\subset G$. Consider a perfect field $C$ of characteristic $\ell\not=p$. We will assume that $C$ is countable (see Remark \ref{rmk:count}), and that there exists $\psi:F\ra C^{\times}$ a non-trivial homomorphism with open kernel. When $\ell>0$, let $W(C)$ be the ring of Witt vectors over $C$ and $K(C)=W(C)[1/\ell]$ its field of fractions. When $\ell=0$ we can put\footnote{In this case, many results here are probably known to some experts but not written down before.} $W(C)=K(C)=C$. Let us consider a few notions that appeared in e.g. \cite{Dat12} in related forms.

\begin{definition} By an {\bf $\ell'$-subgroup} of $G$ we refer to an open compact subgroup that is isomorphic to an inverse limit of finite groups of orders not divisible by $\ell$. 
\end{definition}

A representation (over $C$ or $K(C)$) of a Hausdorff topological group is said to be admissible if every vector has an open stabilizer and every open subgroup has a finite-dimensional fixed subspace. Suppose we have an admissible $C$-representation $\pi$ of $G$. Consider $J\subset G$ an $\ell'$-subgroup. Denote by $\pi|_J$ the restriction of $\pi$ to $J$. Then $\pi|_J$ is a semisimple admissible $C$-representation of $J$ and can be lifted to a semisimple admissible $K(C)$-representation $\pi|_J^{\dag}$. The isomorphism class of the lift is unique, see e.g. \cite[Proposition 43]{Ser77}.

For any Hausdorff topological space $X$ and a ring $R$ (henceforth always commutative with identity) we denote by $C^{\infty}_c(X)_R$ the space of compactly supported locally constant $R$-valued functions on $X$. An {\bf $R$-valued distribution} on $X$ will be an $R$-linear functional on $C^{\infty}_c(X)_R$. When $Y\subset X$ is both open and closed, there is an obvious inclusion $C^{\infty}_c(Y)_R\ira C^{\infty}_c(X)_R$ and thus dually we can restrict distributions from $X$ to $Y$.

The admissible representation $\pi|_J^{\dag}$ has its character being a $K(C)$-valued distribution, i.e. a functional
\[
\Theta_{\pi|_J^{\dag}}:C_c^{\infty}(J)_{K(C)}\ra K(C)
\]
When $J_1,J_2$ are $\ell'$-subgroups, so is $J_{12}:=J_1\cap J_2$, and 
\begin{equation}\label{eq:intersect}
\pi|_{J_{12}}^{\dag}=\pi|_{J_{1}}^{\dag}|_{J_{12}}=\pi|_{J_{2}}^{\dag}|_{J_{12}}\implies\Theta_{\pi|_{J_{12}}^{\dag}}=\Theta_{\pi|_{J_1}^{\dag}}|_{J_{12}}=\Theta_{\pi|_{J_2}^{\dag}}|_{J_{12}}
\end{equation}
Therefore, we can glue $\Theta_{\pi|_{J_i}^{\dag}}$ among any collection of $\ell'$-subgroups $J_i$ and have a well-defined distribution on the union $\bigcup J_i$. Let $G_{\ell'}$ be the union of all $\ell'$-subgroups of $G$. We have defined
\[\Theta^{\dag}_{\pi}:C^{\infty}_c(G_{\ell'})_{K(C)}\ra K(C).\]
In other words, given $f\in C^{\infty}_c(G_{\ell'})_{K(C)}$, we decompose the support of $f$ into a disjoint union of open compact subsets, each of which is contained in some $\ell'$-subgroup $J_i$, so that we can evaluate each part under $\Theta_{\pi|_{J_i}^{\dag}}$ and sum them up. Any two such decompositions agree on their common refined decomposition thanks to \eqref{eq:intersect}, and thus give the same sum.

\begin{remark} It is easy to see that
\[G_{\ell'}=\{g\in G\;|\;\lim_{n\ra+\infty} g^{m\cdot p^n}=\mathrm{id}\text{ for some integer }m\text{ not divisible by }\ell\}.\]
\end{remark}

We can now state our first major observation:
\begin{theorem}\label{thm:adm} Suppose we have $\gamma\in G$, two open compact subgroups $H\subset J$, and $\rho^{\dag}\in\Irr_{K(C)}(H)$ such that
\begin{enumerate}[label=(\roman*)]
    \item The fixed subspace $\pi^J\not=0$.
    \item $\gamma\cdot J$ is contained in an $\ell'$-subgroup $I\subset G$.
    \item The distribution $\Theta^{\dag}_{\pi}$ satisfies
    \begin{equation}\label{eq:adm}
    \Theta^{\dag}_{\pi}|_I*\Theta_{\rho^{\dag}}\not\equiv 0.
    \end{equation}
    \end{enumerate}
Then there exists an element $g\in G$ such that $\rho^{\dag}$ has a non-zero fixed vector under $H\cap {}^gJ$ where ${}^gJ:=gJg^{-1}$.

In particular, the distribution $\Theta^{\dag}_{\pi}$ is {\bf admissible} in the sense of Howe and Harish-Chandra \cite[\S15]{HC99}. 
\end{theorem}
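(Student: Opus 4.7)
First I set up the data. From (ii), $\gamma = \gamma\cdot 1 \in \gamma J \subset I$ gives $\gamma \in I$, hence $J = \gamma^{-1}(\gamma J) \subset I$ and $H \subset I$. Write $\sigma := \pi|_I^{\dag}$; this is a semisimple admissible $K(C)$-representation of the pro-$\ell'$ group $I$ (semisimplicity via Maschke on each finite quotient $I/I_0$), and lifting of $\ell'$-idempotents turns (i) into $\sigma^J \neq 0$ with $\dim_{K(C)} \sigma^J = \dim_C \pi^J$.

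Next I unwind (iii) into a trace formula. Let $e_{\rho^{\dag}} \in \mathcal{H}(H)_{K(C)}$ be the standard central idempotent $(\dim\rho^{\dag}/\meas(H))\cdot\check{\Theta}_{\rho^{\dag}}\cdot 1_H$, so that $P_{\rho^{\dag}} := \sigma(e_{\rho^{\dag}})$ is the $H$-equivariant projection onto the $\rho^{\dag}$-isotype of $\sigma|_H$. A direct Schur-orthogonality computation gives
\[
(\Theta^{\dag}_{\pi}|_I * \Theta_{\rho^{\dag}})(x) \;=\; \frac{\meas(H)}{\dim\rho^{\dag}}\,\Tr\bigl(\sigma(x)\circ P_{\rho^{\dag}}\bigr),\qquad x \in I,
\]
so (iii) says this locally constant trace function on $I$ is not identically zero.

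Then I locate $g$ via a coset-integration argument. Pairing the convolution against the indicator $1_{gJ}$ of a coset of $J$ produces
\[
(\Theta^{\dag}_{\pi}|_I * \Theta_{\rho^{\dag}})(1_{gJ}) \;=\; c\cdot\Tr\bigl(\sigma(g)\circ\sigma(e_J)\circ P_{\rho^{\dag}}\bigr),
\]
and a Fourier-analytic argument on the finite quotient $I/I_0$ (for an open normal subgroup $I_0 \triangleleft I$ contained in $\ker(\rho^{\dag}) \cap J$) extracts some $g \in G$ making this pairing non-zero, hence $P_{\rho^{\dag}}(\sigma(g)\sigma^J) \neq 0$. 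Picking $v \in \sigma^J$ with $w := P_{\rho^{\dag}}(\sigma(g)v) \neq 0$: for any $h \in H \cap {}^{g}J$, the element $g^{-1}hg \in J$ fixes $v$, so $\sigma(g)v$ is $h$-fixed, and by $H$-equivariance of $P_{\rho^{\dag}}$ so is $w$. Since the $\rho^{\dag}$-isotype decomposes as $\rho^{\dag}\otimes M$ with $H$ acting trivially on the multiplicity space $M$, the non-zero vector $w$ forces $(\rho^{\dag})^{H \cap {}^{g}J} \neq 0$.

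The main obstacle is the third step --- upgrading the non-vanishing of the convolution as a distribution on $I$ to non-vanishing of the pairing against some indicator $1_{gJ}$. This is the technical heart of Howe--Harish-Chandra-type admissibility arguments and requires careful Fourier analysis on the finite quotient $I/I_0$; in particular, one must verify that the trace function, being a sum of characters of irreducible representations of $I/I_0$, necessarily has a non-zero Fourier coefficient relative to some coset of $J$.
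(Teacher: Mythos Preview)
Your approach has a genuine gap at exactly the point you flag as ``the main obstacle.'' You try to find $g$ by pairing the convolution against indicators $1_{gJ}$ of $J$-cosets, working on a finite quotient $I/I_0$. But a non-zero trace function $x \mapsto \Tr(\sigma(x)P_{\rho^\dag})$ on $I$ can have \emph{all} of its coset integrals $\int_{gJ}$ vanish. Indeed, $\sigma = \pi|_I^\dag$ is typically highly reducible as an $I$-representation: imagine $\sigma = \sigma_1 \oplus \sigma_2$ with $\sigma_1^J \neq 0$, $\sigma_2^J = 0$, and the $\rho^\dag$-isotype lying entirely in $\sigma_2$. Then $P_{\rho^\dag} \neq 0$ and $\sigma^J \neq 0$, yet
\[
\int_{gJ} \Tr\bigl(\sigma(x)P_{\rho^\dag}\bigr)\,dx \;=\; \meas(J)\,\Tr\bigl(\sigma_2(g)\,\sigma_2(e_J)\,P_{\rho^\dag}\bigr) \;=\; 0
\]
for every $g \in I$, since $\sigma_2(e_J) = 0$. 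Your Fourier-analytic hope that ``the trace function \dots\ necessarily has a non-zero Fourier coefficient relative to some coset of $J$'' is simply false: the trace function is a combination of characters of irreducibles of $I/I_0$ none of which need have a $J$-fixed vector. Note too that your argument would only ever produce $g \in I$, whereas the conclusion allows $g \in G$.

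The paper's proof avoids this by descending to the mod-$\ell$ representation $\pi$ and using its irreducibility \emph{as a $G$-representation}. Hypothesis (iii) is unwound to the statement that $\pi|_H$ contains $\rho$ (the reduction of $\rho^\dag$), giving an $H$-equivariant surjection $s:\pi\twoheadrightarrow\rho$. Pick any non-zero $v \in \pi^J$; since $\pi$ is $G$-irreducible, $G\cdot v$ spans $\pi$, so $s(g.v) \neq 0$ for some $g \in G$. Then $g.v$ is ${}^gJ$-fixed, hence $s(g.v) \in \rho$ is $(H \cap {}^gJ)$-fixed, and one lifts back to $\rho^\dag$. The point you are missing is that $g$ must be allowed to range over all of $G$; restricting to $I$ discards $G$-irreducibility, which is precisely what links $\pi^J$ to the $\rho$-isotype.
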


\begin{proof} The distribution $\Theta^{\dag}_{\pi}$ is obviously $G$-invariant. To show the rest, since $\Theta^{\dag}_{\pi}|_I=\Theta_{\pi^{\dag}_I}$, equation \eqref{eq:adm} is equivalent to that $\pi^{\dag}_H=\pi^{\dag}_I|_H$ contains $\rho^{\dag}$. Since lifting from $C$ to $K(C)$ is unique, this is equivalent to that $\pi|_H$ contains $\rho$, the latter being the reduction of $\rho^{\dag}$. 

Since $H$ is also an $\ell'$-subgroup, we have an $H$-equivariant surjective map $s:\pi\sra\rho$. Choose non-zero $v\in\pi$ that is fixed by $J$. Since $\pi$ is irreducible, there exists $g\in G$ such that $s(g.v)\not=0$.  Then $g.v$ is fixed by ${}^gJ$ and thus $s(g.v)$ is fixed by $H\cap {}^gJ$. That is, the restriction of $\rho$ to $H\cap {}^gJ$ contains a trivial representation. Lifting $\rho$ to $\rho^{\dag}$, we know that the restriction of $\rho^{\dag}$ to $H\cap {}^gJ$ also contains a trivial representation.
\end{proof}


For any ring $R$ and open compact subset $J\subset G_{\ell'}$ we denote by $[J]_R\in C_c^{\infty}(G_{\ell'})_R$ the function that takes the value $1\in R$ in $J$ and $0$ elsewhere. When $R=K(C)$ we will often abbreviate it to $[J]$. Observe that $C_c^{\infty}(G_{\ell'})_R$ has a free basis given by a collection of $[J]_R$'s. This gives
\begin{lemma}\label{lem:BC} For any ring homomorphism $R\ra S$ (required to send $1$ to $1$) there is a natural isomorphism
\begin{equation}
C_c^{\infty}(G_{\ell'})_S=C_c^{\infty}(G_{\ell'})_R\otimes_RS.
\end{equation}
\end{lemma}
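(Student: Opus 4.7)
The plan is to use the preceding basis observation to reduce the lemma to a formal statement about free modules.

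First I would construct the natural map $\Phi : C_c^{\infty}(G_{\ell'})_R \otimes_R S \to C_c^{\infty}(G_{\ell'})_S$. The ring map $\phi : R \to S$ induces the $R$-linear ``post-composition'' map $f \mapsto \phi \circ f$, which by the universal property of tensor product extends to an $S$-linear map $\Phi$. By construction $\Phi([J]_R \otimes 1) = [J]_S$ for every open compact $J \subset G_{\ell'}$, and naturality in the pair $(R \to S)$ is automatic since no choices were made.

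The main step is bijectivity. The observation supplies a collection $\{J_\alpha\}$ of open compact subsets so that $\{[J_\alpha]_R\}$ is a free $R$-basis of $C_c^{\infty}(G_{\ell'})_R$; the key point I would stress is that the \emph{same} collection works for every ring at once. I would justify this by spelling out the underlying topological construction: write $G_{\ell'}$ as a countable disjoint union of open compact subsets (using its $\sigma$-compactness, which follows from second countability of $F$ and hence of $G$), and on each compact piece extract a basis inductively from a tower of refining clopen partitions, at each stage replacing one new piece's indicator by the difference of the parent's indicator with the other new pieces'. The resulting $J_\alpha$'s depend only on the topology of $G_{\ell'}$, so $\{[J_\alpha]_S\}$ is also an $S$-basis of $C_c^{\infty}(G_{\ell'})_S$. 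Since tensor product commutes with direct sums, $\{[J_\alpha]_R \otimes 1\}$ is an $S$-basis of the source, which $\Phi$ sends bijectively to $\{[J_\alpha]_S\}$, and we conclude.

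I do not see a genuine obstacle here: the lemma is essentially formal once the basis observation is in hand. The one point worth highlighting, as above, is the ring-independence of the chosen basis $\{J_\alpha\}$, which is purely topological and can be explained in a sentence or two using $\sigma$-compactness plus a refining tower of clopen partitions.
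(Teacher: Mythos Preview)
Your proposal is correct and follows exactly the approach the paper intends: the paper does not give a separate proof, merely stating the lemma as an immediate consequence of the preceding observation that $C_c^{\infty}(G_{\ell'})_R$ has a free $R$-basis given by indicator functions $[J]_R$. Your write-up spells out the details (the natural map, ring-independence of the basis, $\sigma$-compactness) that the paper leaves implicit, but the underlying argument is the same.
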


Recall that $\psi:F\ra C^{\times}$ is a non-trivial homomorphism with open kernel. Let $\psi^{\dag}:F\ra W(C)^{\times}\subset K(C)^{\times}$ be the unique (Teichm\"{u}ller) lift. Write $\fg:=\Lie\bG(F)$ and $\fg^*:=\Lie^*(\bG)(F)$. Let $\nild$ be the set of nilpotent $\Ad^*(G)$-orbits in $\fg^*$. We are ready to state our Harish-Handra--Howe local character expansion:

\begin{corollary}\label{cor:LCE} Suppose $\cha(F)=0$. There exists an open subgroup $U_{\pi}\subset G$ and a collection $c_{\cO}(\pi,\psi)\in\Q$ for $\cO\in\nild$ such that for any function  $f\in C_c^{\infty}(U_{\pi})_{K(C)}$ supported on $U_{\pi}$, we have \begin{equation}\label{eq:LCE}
\Theta^{\dag}_{\pi}(f)=\sum_{\cO\in(\fg^*)^{\mathrm{nil}}/\sim}c_{\cO}(\pi,\psi)\cdot\mu_{\cO}(\widehat{f\circ\exp})\in K(C).
\end{equation}
\end{corollary}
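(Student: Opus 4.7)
The plan is to transfer the problem to $\fg$ via the exponential map and invoke Harish-Chandra's classical descent to the nilpotent cone, adapted to $K(C)$-coefficients.

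Take $U_\pi \subset G$ an open pro-$p$ (hence $\ell'$) subgroup small enough that $U_\pi = \exp(\fu)$ for some open neighborhood $\fu \subset \fg$ of $0$ and that $\pi^{U_\pi}\neq 0$; we allow ourselves to shrink $U_\pi$ further below. Since $U_\pi$ is an $\ell'$-subgroup, the restriction of $\Theta^\dag_\pi$ to $C_c^\infty(U_\pi)_{K(C)}$ equals the character of the admissible lift $\pi|^\dag_{U_\pi}$. Pulling back along $\exp$ yields a $G$-invariant $K(C)$-valued distribution $D$ on $\fu$, which is admissible in the sense of Howe--Harish-Chandra by Theorem \ref{thm:adm}.

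I would then run Harish-Chandra's classical descent to the nilpotent cone, which relies only on $G$-invariance, admissibility, and the existence of a non-trivial additive character with open kernel (played here by $\psi^\dag$). To reuse the classical $\mathbb{C}$-coefficient statements rigorously, I would fix an embedding $\iota: K(C)\hookrightarrow\mathbb{C}$, available since $K(C)$ is a characteristic-$0$ field of cardinality at most $2^{\aleph_0}$, and apply the classical theorem to $\iota D$ with respect to the additive character $\iota\circ\psi^\dag$. After possibly shrinking $U_\pi$ (and $\fu$ accordingly), this produces coefficients $c_\cO \in \mathbb{C}$ such that the identity \eqref{eq:LCE} holds after applying $\iota$ to both sides, for every $f \in C_c^\infty(U_\pi)_{K(C)}$.

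It remains to show $c_\cO \in \mathbb{Q} \subset K(C)$. Linear independence of the $\mu_\cO \circ \widehat{\phantom{f}}$ as germs of distributions at $0$ identifies each $c_\cO$ with a specific linear functional in the values of $D$ on a finite collection of test functions; by a careful unpacking of Harish-Chandra's linear-independence argument (via the $\Gm$-homogeneity of the $\mu_\cO\circ\widehat{\phantom{f}}$ together with Shalika-germ duality at regular semisimple integer orbits), this extraction can be arranged with $\mathbb{Q}$-coefficients acting on $\mathbb{Q}$-rational test inputs, so that the $c_\cO$ land in $\iota(K(C))$. Since $D$ is intrinsic to $\pi$ and $\psi$, varying $\iota$ by any element of $\mathrm{Aut}(\mathbb{C}/\mathbb{Q})$ gives the same $c_\cO$, forcing them into $\mathbb{Q}$. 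I expect this step to be the main obstacle: concretizing the $\mathbb{Q}$-extraction requires unpacking the linear-independence step in Harish-Chandra's proof, and the Galois argument requires care because $\psi^\dag$ itself is not $\mathrm{Aut}(K(C)/\mathbb{Q})$-fixed but appears inside the Fourier transforms $\widehat{f \circ \exp}$.
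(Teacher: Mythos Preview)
Your overall strategy---embed $K(C)\hookrightarrow\C$ via some $\iota$, apply Harish-Chandra's classical expansion to the admissible invariant distribution $\iota\Theta_\pi^\dag$, then descend the coefficients---is exactly what the paper does (the paper treats this as the $\gamma=\mathrm{id}$ case of Corollary~\ref{cor:LCE2}). The difference lies in how the descent is carried out.

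For the step $c_\cO\in\iota(K(C))$, the paper gives a concrete inductive argument rather than a ``careful unpacking of Harish-Chandra's linear independence'': one runs downward induction on the closure order, and for a given $\cO$ picks $n\in\cO$ and a lattice $\Lambda\subset\fg^*$ so that $n+\Lambda$ meets only orbits $\ge\cO$; then the test function $f$ built from $h=\widehat{[n+\Lambda]}$ (scaled into $U_\pi$) lies in $C_c^\infty(U_\pi)_{\iota(K(C))}$, all orbital integrals $\mu_{\cO'}(\hat h)$ lie in $\iota(K(C))$ by Appendix~\ref{app:harm}, and only the terms with $\cO'\ge\cO$ survive, isolating $c_\cO$. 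This replaces your appeal to Shalika-germ duality with a two-line computation.

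For the step $c_\cO\in\Q$, your Galois argument does not work, and your own diagnosis of why is correct. Composing with $\sigma\in\mathrm{Aut}(\C/\Q)$ replaces $\psi_\C$ by $\sigma\psi_\C$, so the Fourier transforms on the right-hand side change and you are comparing the $c_\cO$ for two \emph{different} additive characters; there is no reason these agree. Moreover $\Theta_\pi^\dag$ is not $\Q$-valued on $\Q$-valued test functions (already the character of $\pi|_I^\dag$ on a coset $gH$ can involve prime-to-$\ell$ roots of unity), so you cannot force both sides into $\Q$ that way either. The paper does not attempt a Galois argument here at all: it simply cites \cite[\S7]{Var14}, where rationality of the local character expansion coefficients at the identity is established directly. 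So the honest fix is to stop at $c_\cO\in K(C)$ via the explicit inductive test functions, and then invoke Varma for the passage to $\Q$.
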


See Appendix \ref{app:harm} for an explanation of orbital integrals and Fourier transforms with coefficients in $K(C)$. We will generalize Corollary \ref{cor:LCE} to Corollary \ref{cor:LCE2} and give the proof altogether. Let us also mention that \cite{Tsa25b} contains a result similar to Corollary \ref{cor:LCE}.

We continue to assume $\cha(F)=0$. Let $\gamma\in G_{\ell'}$ be semisimple. We denote by $Z_{\fg}(\gamma)^*/\!\!\sim$ the set of nilpotent $\Ad(Z_G(\gamma))$-orbits in $Z_{\fg}(\gamma)^*$. Consider the map $\alpha_{\gamma}:\left(G/Z_G(\gamma)\right)\times Z_{\fg}(\gamma)\ra G$ by $(g,X)\mapsto g(\gamma\cdot\exp(X))g^{-1}$. It is easy to see that $\alpha_{\gamma}$ is a diffeomorphism at the identity. Hence there exist open compact subsets $U_{\gamma}^0\subset G/Z_G(\gamma)$, $U_{\gamma}^1\subset Z_{\fg}(\gamma)$ and $U_{\gamma}\subset G$ such that
\begin{enumerate}
    \item We have $\mathrm{id}\cdot Z_G(\gamma)\subset U_{\gamma}^0$, $0\in U_{\gamma}^1$ and $\gamma\in U_{\gamma}$.
    \item The map $\alpha_{\gamma}$ sends $U_{\gamma}^0\times U_{\gamma}^1$ isomorphically to $U_{\gamma}$.
\end{enumerate}
Fix such $U_{\gamma}^0$, $U_{\gamma}^1$ and $U_{\gamma}$. Choose measures on $Z_G(\gamma)$ and $Z_{\fg}(\gamma)$ so that again every open pro-$p$ subgroup has measure in $p^{\Z}$, and the two measures agree under the exponential map. Give $G/Z_G(\gamma)$ the quotient measure. Then every $f\in C_c^{\infty}(U_{\gamma})_{K(C)}$ can be integrated to $\int_{U_{\gamma}^0}f\circ \alpha_{\gamma}\in C_c^{\infty}(U_{\gamma}^1)_{K(C)}$. 

\begin{corollary}\label{cor:LCE2} Suppose $\cha(F)=0$ and let $\gamma\in G_{\ell'}$ be semisimple as above. There exists a lattice $U_{\gamma,\pi}^1\subset U_{\gamma}^1$ and a collection $c_{\gamma,\cO}(\pi,\psi)\in K(C)$ for $\cO\in Z_{\fg}(\gamma)^*/\!\!\sim$ such that for any function $f\in C_c^{\infty}(\alpha_{\gamma}(U_{\gamma}^0\times U_{\gamma,\pi}^1))_{K(C)}$, we have
\begin{equation}\label{eq:LCE2}
\Theta^{\dag}_{\pi}(f)=\sum_{\cO\in Z_{\fg}(\gamma)^*/\sim}c_{\gamma,\cO}(\pi,\psi)\cdot\mu_{\cO}(\widehat{\int_{U_{\gamma}^0}f\circ \alpha_{\gamma}})\in K(C)
\end{equation}
\end{corollary}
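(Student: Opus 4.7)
The plan is Harish-Chandra's descent to the centralizer $M := Z_G(\gamma)$. I first transfer $\Theta^\dag_\pi$ from a neighborhood of $\gamma$ in $G$ to an $M$-invariant $K(C)$-valued distribution $T_\gamma$ on a neighborhood of $\mathrm{id}$ in $M$; then I verify that $T_\gamma$ is admissible in Howe's sense; finally I apply the local character expansion at the identity of $M$ to $T_\gamma$. This last step should be the distribution version of Corollary \ref{cor:LCE} (valid for any admissible invariant $K(C)$-valued distribution), which I would prove together with Corollary \ref{cor:LCE} itself.

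For the transfer, I set
\[
T_\gamma\Bigl(\int_{U_\gamma^0} f\circ\alpha_\gamma\Bigr) := \Theta^\dag_\pi(f),\qquad f\in C^\infty_c(U_\gamma)_{K(C)}.
\]
Well-definedness reduces to the claim that the pulled-back distribution $\Theta^\dag_\pi\circ\alpha_\gamma$ on $U_\gamma^0\times U_\gamma^1$ is invariant under local $G$-translation in the first factor, hence factors as (the quotient measure on $U_\gamma^0$) tensor with some distribution on $U_\gamma^1$. This is a form of Harish-Chandra's submersion principle and follows from the $G$-invariance of $\Theta^\dag_\pi$ together with the fact that conjugation by $h\in G$ corresponds to $(gM,X)\mapsto(hgM,X)$ under $\alpha_\gamma$. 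I then transport $T_\gamma$ through $\exp$ to a distribution on a neighborhood of $\mathrm{id}\in M$.

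Next I verify admissibility of $T_\gamma$ using Theorem \ref{thm:adm}. Given an open compact $J_M\subset M$ and $\rho^\dag\in\Irr_{K(C)}(J_M)$, I choose an open compact $J\subset G$ with $J\cap M=J_M$, $\pi^J\ne 0$, and small enough that $\gamma\cdot J\subset I$ for some $\ell'$-subgroup $I\subset G$ (possible because $\gamma\in G_{\ell'}$). Unwinding the transfer shows that nonvanishing of $T_\gamma|_{J_M}*\Theta_{\rho^\dag}$ forces $\Theta^\dag_\pi|_I*\Theta_{\rho^\dag}\ne 0$, so Theorem \ref{thm:adm} produces $g\in G$ with $\rho^\dag$ having a nonzero $J_M\cap {}^gJ$-fixed vector; finiteness of the relevant $\rho^\dag$ up to $J_M$-conjugacy is then standard. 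Applying the local character expansion on $M$ to $T_\gamma$ yields constants $c_{\gamma,\cO}(\pi,\psi)\in K(C)$ and a sublattice $U_{\gamma,\pi}^1\subset U_\gamma^1$, and reversing the transfer recovers \eqref{eq:LCE2}.

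The main obstacle is the admissibility check. In the classical setting this is Harish-Chandra's theorem, but here one must propagate admissibility from the level of $\Theta^\dag_\pi$ (as packaged in Theorem \ref{thm:adm}) down to the transferred distribution on $M$, matching the open compact subgroup data on the two sides in a way that keeps everything inside an $\ell'$-subgroup. This matching is possible precisely because $\gamma\in G_{\ell'}$, and is where the $\ell'$-setup of this paper is used essentially; the rest of the argument is a verification that the classical descent goes through with $K(C)$-coefficients, which is exactly what the distribution-version of Corollary \ref{cor:LCE} is designed to accommodate.
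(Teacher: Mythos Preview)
Your route is genuinely different from the paper's, and the paper's is much shorter. The paper does \emph{not} redo Harish-Chandra's descent over $K(C)$. Instead, it uses the standing countability hypothesis on $C$ to fix a field embedding $\iota:K(C)\hookrightarrow\C$, base-changes $\Theta^{\dag}_{\pi}$ to a $\C$-valued distribution via Lemma~\ref{lem:BC}, and then quotes the classical result \cite[Theorem 16.2]{HC99} as a black box: admissibility of $\Theta^{\dag}_{\pi}$ (Theorem~\ref{thm:adm}) is all that is needed to invoke Harish-Chandra's expansion near $\gamma$ over $\C$, with some neighborhood $U^1_{\gamma,\pi}$ and constants $c_{\gamma,\cO}(\pi,\psi_{\C})\in\C$. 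The only new content is a short rationality step: by downward induction on the closure order, one chooses for each $\cO$ a test function $f$ built from $\widehat{[n+\Lambda]}$ (with $n\in\cO$ and $\Lambda$ a lattice meeting only orbits $\ge\cO$) so that \eqref{eq:LCE2} becomes a triangular system over $\iota(K(C))$, forcing $c_{\gamma,\cO}(\pi,\psi_{\C})\in\iota(K(C))$. Then $c_{\gamma,\cO}(\pi,\psi):=\iota^{-1}(c_{\gamma,\cO}(\pi,\psi_{\C}))$. No separate proof of Corollary~\ref{cor:LCE} is needed; it is the case $\gamma=\mathrm{id}$ of Corollary~\ref{cor:LCE2}.

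Your plan has a structural gap. You propose to apply ``the distribution version of Corollary~\ref{cor:LCE}'' on $M=Z_G(\gamma)$ to the transferred distribution $T_\gamma$. But the paper never states or proves such a version: Corollary~\ref{cor:LCE} is for the lifted character of an irreducible admissible $C$-representation, and its proof is \emph{via} Corollary~\ref{cor:LCE2}, not the other way around. Since $T_\gamma$ is not (and has no reason to be) the lifted character of any $C$-representation of $M$, you would have to reprove Howe's finiteness theorem for arbitrary admissible invariant $K(C)$-valued distributions---essentially redoing \cite[\S15--16]{HC99} over $K(C)$---or else embed into $\C$ anyway, at which point you have recovered the paper's argument with extra steps. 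Relatedly, your admissibility check for $T_\gamma$ tries to invoke Theorem~\ref{thm:adm}, but that theorem is specifically about $\Theta^{\dag}_\pi$ and uses the representation $\pi$ (e.g.\ the hypothesis $\pi^J\neq 0$ and the map $s:\pi\twoheadrightarrow\rho$); it does not give admissibility of an arbitrary descended distribution on $M$. Your sketch writes $\Theta^{\dag}_\pi|_I*\Theta_{\rho^{\dag}}$ with $\rho^{\dag}$ an irreducible of $J_M\subset M$, but $J_M$ is not open in $G$, so this convolution is not in the setting of Theorem~\ref{thm:adm} as stated. The paper sidesteps all of this: admissibility is checked once for $\Theta^{\dag}_\pi$ on $G$, and the descent is entirely outsourced to the classical $\C$-theory.
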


\begin{proof}  Since $C$ is countable, we may fix a field embedding $\iota:K(C)\ira\C$. Lemma \ref{lem:BC} guarantees that we can base change $\Theta^{\dag}_{\pi}$ along $\iota$. With the admissibility in Theorem \ref{thm:adm}, result of Harish-Chandra \cite[Theorem 16.2]{HC99} gives the existence of neighborhood $U_{\gamma,\pi}^1$ and $c_{\gamma,\cO}(\pi,\psi_{\C})\in\C$ (that depend on $\psi_{\C}:F\xra{\psi^{\dag}}K(C)\xhookrightarrow{\iota}\C$) such that \eqref{eq:LCE2} holds for $f\in C_c^{\infty}(\alpha_{\gamma}(U_{\gamma}^0\times U_{\gamma,\pi}^1))_{\C}$.

We claim that $c_{\gamma,\cO}(\pi,\psi_{\C})\in\iota(K(C))\subset\C$. Suppose this is true for all larger orbits. Pick a lattice $\Lambda\subset Z_{\fg}(\gamma)^*$ and $n\in\cO$ such that every $\cO'\in Z_{\fg}(\gamma)^*/\!\!\sim$ intersecting $n+\Lambda$ is either larger or equal to $\cO$. Consider $h=\widehat{[n+\Lambda]}\in C_c^{\infty}(Z_{\fg}(\gamma))_{\iota(K(C))}$, where the Fourier transform is defined using $\psi_{\C}$. By scaling $n$ and $\Lambda$ simultaneously by $\varpi_F^{-2n}$ for $n\gg 0$ we may assume $h$ is supported in $U_{\gamma,\pi}^1$. Let $f\in C_c^{\infty}(\alpha_{\gamma}(U_{\gamma}^0\times U_{\gamma,\pi}^1))_{\iota(K(C))}$ be given by $f(\alpha_{\gamma}(u_0,u_1))=h(u_1)$. Thanks to Appendix \ref{app:harm} we have
\[
\mu_{\cO'}(\widehat{\int_{U_{\gamma}^0}f\circ \alpha_{\gamma}})=\operatorname{meas}(U_{\gamma}^0)\cdot\mu_{\cO'}(h)\in\iota(K(C))\;\text{ for any }\;\cO'.
\]
By construction of $h$ we have $\mu_{\cO'}(h)\not=0$ iff $\cO'\ge \cO$. Comparing both sides of \eqref{eq:LCE2} for our chosen $f$ we have $c_{\gamma,\cO}(\pi,\psi_{\C})\in\iota(K(C))$. We can now assign $c_{\gamma,\cO}(\pi,\psi):=\iota^{-1}\left(c_{\gamma,\cO}(\pi,\psi_{\C})\right)$, with which \eqref{eq:LCE2} hold for all $f\in C_c^{\infty}(\alpha_{\gamma}(U_{\gamma}^0\times U_{\gamma,\pi}^1))_{K(C)}$.
\end{proof}

\begin{remark} When $\gamma=\mathrm{id}$ we furthermore have $c_{\cO}(\pi,\psi)=c_{\mathrm{id},\cO}(\pi,\psi)\in\Q$ thanks to \cite[\S7]{Var14}.
\end{remark}

\begin{remark} Let us remark that Corollary \ref{cor:LCE2} gives a different proof to some earlier results of Vign\'{e}ras and of Dat \cite[Th\'eor\`eme 2 (Appendice) and Th\'eor\`eme 2.1.4]{Dat12} about local constancy of mod-$\ell$ characters.

Our distribution $\Theta_{\pi}^{\dag}$ is locally glued from characters of representations of $\ell'$-subgroups over $W(C)$. Hence 
\[f\in C_c^{\infty}(G_{\ell'})_{W(C)}\implies\Theta_{\pi}^{\dag}(f)\in W(C).\]
Fix $\gamma\in G_{\ell'}$ that is regular semisimple. Then Corollary \ref{cor:LCE2} applied with $\iota:K(C)\ra\C$ together with \cite[Theorem 16.1]{HC99} show that $\Theta_{\pi}^{\dag}$ is locally constant at $\gamma$ with some value $T(\gamma)\in K(C)$ in the following sense: There exists an $\ell'$-subgroup $H_{\gamma}$ such that for any open subgroup $H\subset H_{\gamma}$ and any coset $hH\subset H_{\gamma}$ we have
\begin{equation}\label{eq:const1}
\Theta_{\pi}^{\dag}([\gamma h H])=T(\gamma)\cdot\meas(H)\in W(C).
\end{equation}
Since we can choose $H$ with $\meas(H)\in p^{\Z}$, this implies $T(\gamma)\in W(C)$. We can thus take reduction modulo $\ell$, giving
\begin{equation}\label{eq:const2}
\Theta_{\pi}([\gamma hH]_C)=\overline{T(\gamma)}\cdot\overline{\meas(H)}\in W(C).
\end{equation}
where $\Theta_{\pi}$ is the usual character and $\overline{\meas(H)}$ is the image of $\meas(H)\in\Z[1/p]$ in $C$. 
In other words, $\Theta_{\pi}$ is locally constant near $\gamma$ with value $\overline{T(\gamma)}$.
\end{remark}

\section{Waldspurger-DeBacker homogeneity}

We drop the assumption $\cha(F)=0$ and discuss how the Harish-Chandra--Howe local character expansion can be enhanced by the result of Waldspurger \cite{Wa95} and DeBacker \cite{De02a}:

\begin{theorem}\label{thm:DB} Suppose the hypotheses in \cite[\S2.2, \S3.2]{De02a} hold for $\bG$ and $F$, possibly with $\cha(F)>0$. Then Corollary \ref{cor:LCE} holds with $U_\pi$ being the union of all Moy-Prasad subgroups $G_{x,\depth(\pi)+}$, and $\exp$ possibly replaced by a mock exponential map in \cite[\S3.2]{De02a}.
\end{theorem}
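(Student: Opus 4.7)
The plan is to mimic the proof of Corollary \ref{cor:LCE2} line by line, replacing the appeal to \cite[Theorem 16.2]{HC99} with DeBacker's refinement from \cite[\S3]{De02a} (and Waldspurger \cite{Wa95} where relevant), and tracking the depth so that the sharp range $U_\pi=\bigcup_x G_{x,\depth(\pi)+}$ emerges from DeBacker's theorem directly. Since Theorem \ref{thm:adm} establishes admissibility of $\Theta^\dag_\pi$ with no characteristic assumption on $F$, the hypotheses for DeBacker's machinery are met, and the only real work is to descend the resulting coefficients from $\C$ back to $K(C)$ and then to $\Q$.

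First, I fix a field embedding $\iota:K(C)\hookrightarrow\C$ using the countability of $C$, and, via Lemma \ref{lem:BC}, base change $\Theta^\dag_\pi$ to a $\C$-valued $G$-invariant admissible distribution $\iota_*\Theta^\dag_\pi$. Each Moy-Prasad subgroup $G_{x,r+}$ with $r\ge 0$ is pro-$p$, hence an $\ell'$-subgroup, so $\Theta^\dag_\pi|_{G_{x,r+}}$ is literally the character of the lifted representation $\pi|_{G_{x,r+}}^\dag$; because the lift is unique, the DeBacker-depth of $\iota_*\Theta^\dag_\pi$ matches the usual $\depth(\pi)$. Applying DeBacker's main result to this distribution under the hypotheses of \cite[\S2.2, \S3.2]{De02a} then yields constants $c_\cO(\pi,\psi_\C)\in\C$ such that \eqref{eq:LCE} holds for every $f\in C_c^\infty(U_\pi)_\C$, with $\exp$ replaced by a mock exponential as in \cite[\S3.2]{De02a} when $\cha(F)>0$.

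Second, I descend the coefficients by repeating the inductive Fourier-transform argument from the proof of Corollary \ref{cor:LCE2}. Working from the largest orbits $\cO$ downward in the closure order on $(\fg^*)^{\mathrm{nil}}/\!\!\sim$, apply both sides of \eqref{eq:LCE} to the function derived from $h=\widehat{[n+\Lambda]}$ for a lattice $\Lambda\subset\fg^*$ and a base point $n\in\cO$ chosen so that only orbits $\cO'\ge\cO$ contribute, scaled by $\varpi_F^{-2n}$ so that $h$ is supported on $U_\pi$ viewed under the (mock) exponential. This isolates $c_\cO(\pi,\psi_\C)$ and shows it lies in $\iota(K(C))$, so I set $c_\cO(\pi,\psi):=\iota^{-1}(c_\cO(\pi,\psi_\C))$. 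Rationality $c_\cO(\pi,\psi)\in\Q$ then follows from \cite[\S7]{Var14} exactly as in the remark following Corollary \ref{cor:LCE2}.

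The step needing the most care is verifying that DeBacker's notion of the depth of a $G$-invariant distribution really coincides with $\depth(\pi)$ for $\iota_*\Theta^\dag_\pi$: one must unwind DeBacker's definition (phrased in terms of vanishing on certain averages of characteristic functions of Moy-Prasad cosets) and match it against the fixed-vector characterization of $\depth(\pi)$, using the fact that lifting from $C$ to $K(C)$ preserves dimensions of fixed subspaces. The possible replacement of $\exp$ by a mock exponential does not obstruct the descent, since the mock exponential is a Lie-algebra-to-group datum independent of the coefficient field, and the Fourier transforms used to extract $c_\cO$ take place entirely on $\fg$ with respect to $\psi^\dag$, which base-changes under $\iota$ to $\psi_\C$.
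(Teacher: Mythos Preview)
Your overall plan---embed $K(C)$ into $\C$, feed $\iota_*\Theta_\pi^\dag$ into DeBacker's machinery, then descend the coefficients---is close in spirit to the paper, but there is a real gap at exactly the step you flag as needing ``the most care.'' DeBacker's Theorem~3.5.2 is stated for the character of an irreducible admissible $\C$-representation, and $\iota_*\Theta_\pi^\dag$ is not such a character: it is only glued from local lifts, with no global $\C$-representation behind it. So one cannot apply DeBacker as a black box; one must rerun his proof. When one does, the single place where DeBacker uses the representation (rather than just the distribution) is \cite[Lemma~3.3.2]{De02a}: for every $s>\depth(\pi)$ and every $x$, any character $\psi_\chi$ of $G_{x,s}/G_{x,s+}$ occurring in $\pi|_{G_{x,s}}$ must have $\chi$ \emph{degenerate}, i.e.\ the coset $\chi+\fg^*_{x,(-s)+}$ meets the nilpotent cone. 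This is strictly stronger than the fixed-vector characterization of $\depth(\pi)$, which only asserts that the \emph{trivial} character occurs somewhere at depth $\depth(\pi)+$; knowing that lifting from $C$ to $K(C)$ preserves dimensions of fixed subspaces does not control which \emph{non-trivial} characters occur. For $\C$-representations this degeneracy is the Moy--Prasad theorem on unrefined minimal $K$-types; for mod-$\ell$ representations it is Vign\'eras \cite[Th\'eor\`eme II.5.7.a(a)]{Vig96}, and the paper invokes precisely this. Without that input your argument does not close.

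As a secondary remark, the paper avoids your base-change-and-descend entirely: it reruns DeBacker's proof directly with the $K(C)$-valued distribution $\Theta_\pi^\dag$, checking only that the analogue of \cite[Lemma~3.3.2]{De02a} holds (via the equivalence $\psi_\chi^\dag\subset\pi|_J^\dag \Leftrightarrow \psi_\chi\subset\pi|_J$ and then Vign\'eras). Your descent step is harmless extra work once the gap above is repaired, but it is not needed.
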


\begin{proof} We plug $\Theta_{\pi}^{\dag}$ into the proof of \cite[Theorem 3.5.2]{De02a}. The only property of $\pi$ is used in \cite[Lemma 3.3.2]{De02a} and is the following: When $J=G_{x,s}$ is the Moy-Prasad subgroup, and when $\pi^{\dag}_J$ contains the character given by
\[
\def\arraystretch{1.2}
\begin{array}{ccccccccl}
\psi^{\dag}_{\chi}&:&J=G_{x,s}&\sra& G_{x,s}/G_{x,s+}&\cong&\Hom_k(\fg^*_{x,-s}/\fg^*_{x,(-s)+},\,k)&\ra&K(C)^{\times}\\
&&&&&&\phi&\mapsto &\psi^{\dag}(\phi(\chi))
\end{array}
\]
then $\chi$ is required to be {\bf degenerate}, i.e. $\chi+\fg^*_{x,(-s)+}$ intersects the nilpotent cone non-trivially.  Observe that $\pi|_J^{\dag}$ contains $\psi_{\chi}^{\dag}$ if and only if $\pi|_J$ contains $\psi_{\chi}$. The latter implies that $\chi$ is indeed degenerate by \cite[Th\'{e}or\'{e}me II.5.7.a(a)]{Vig96}.
\end{proof}

\section{Dimensions of fixed subspaces}

\begin{corollary} Suppose $F$ and $\bG$ satisfy either the condition in Corollary \ref{cor:LCE} or the hypotheses used in Theorem \ref{thm:DB}. Let $q$ be the order of the residue field of $F$. Let $P\subset G$ be any parahoric subgroup and $P_n$ be the $n$-th congruence subgroup. Then there exists a polynomial $h_{P}(X)\in\Q[X]$ such that $\dim_C\pi^{P_{2n}}=h_{P}(q^{2n})$ for all integers $n>N_{\pi}$ for some $N_{\pi}$. Moreover $\deg(h_P)=\max\{\frac{1}{2}\dim\cO\;|\;\cO\in\nild,\;c_{\cO}(\pi)\not=0\}$. \end{corollary}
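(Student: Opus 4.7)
The plan is to express $\dim_C \pi^{P_{2n}}$ as $\meas(P_{2n})^{-1}\Theta_\pi^{\dag}([P_{2n}])$ and apply the local character expansion. Since for $n \geq 1$ the subgroup $P_{2n}$ is pro-$p$, hence an $\ell'$-subgroup, and $\meas(P_{2n}) \in p^{\Z}$ is invertible in both $C$ and $K(C)$, the projector onto $\pi^{P_{2n}}$ is $\meas(P_{2n})^{-1}\pi([P_{2n}]_C)$; therefore
\[
\dim_C \pi^{P_{2n}} \;=\; \meas(P_{2n})^{-1}\Theta_\pi([P_{2n}]_C) \;=\; \meas(P_{2n})^{-1}\Theta_\pi^{\dag}([P_{2n}]),
\]
using that lifting to $K(C)$ preserves dimensions of fixed subspaces.

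Next I pick $x$ in the facet defining $P$, so that $P_{2n} = G_{x, 2n}$, and take $n$ large enough that $P_{2n} \subset U_\pi$. Then $\exp$ (or a mock exponential) identifies $\fg_{x, 2n}$ with $G_{x, 2n}$, giving $[P_{2n}] \circ \exp = [\fg_{x, 2n}]$ and $\meas(P_{2n}) = \meas(\fg_{x, 2n})$ by the measure compatibility fixed in the paper. The Fourier transform of a lattice indicator is a multiple of the dual lattice indicator: $\widehat{[\fg_{x, 2n}]} = \meas(\fg_{x, 2n})\cdot [L_n^*]$, with $L_n^* = \varpi^{-2n} L_0^*$ for a fixed $L_0^* \subset \fg^*$ once $n$ is large. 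Harish-Chandra--Ranga Rao homogeneity $\mu_\cO(\varphi \circ s_t) = |t|^{-\frac{1}{2}\dim\cO}\mu_\cO(\varphi)$ (with $s_t(X) = tX$, $t \in F^\times$), applied to $\varphi = [L_0^*]$ and $t = \varpi^{2n}$, combined with \eqref{eq:LCE}, yields
\[
\dim_C \pi^{P_{2n}} \;=\; \sum_{\cO \in \nild} c_\cO(\pi, \psi)\,\mu_\cO([L_n^*]) \;=\; \sum_{\cO} c_\cO(\pi, \psi)\,\mu_\cO([L_0^*]) \cdot (q^{2n})^{\frac{1}{2}\dim\cO}.
\]
Since the left-hand side is an integer for all sufficiently large $n$, Lagrange interpolation produces $h_P \in \Q[X]$ with $\dim_C \pi^{P_{2n}} = h_P(q^{2n})$ for all $n > N_\pi$, and $\deg h_P \leq d_{\max} := \max\{\frac{1}{2}\dim\cO : c_\cO(\pi) \neq 0\}$.

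The hard part will be the sharp degree equality $\deg h_P = d_{\max}$, which reduces to the non-vanishing of
\[
\sum_{\frac{1}{2}\dim\cO = d_{\max}} c_\cO(\pi,\psi)\,\mu_\cO([L_0^*]).
\]
Each $\mu_\cO([L_0^*]) > 0$ since $L_0^*$ is a neighborhood of $0 \in \overline{\cO}$, but the $c_\cO$ for maximal $\cO$ are a priori rationals of arbitrary sign, so numerical cancellation is the concern. I plan to rule this out via the M\oe glin--Waldspurger--Varma theorem, which identifies $c_\cO(\pi, \psi)$ for maximal $\cO$ in the wave-front set as the dimension of a degenerate Whittaker space and thereby forces $c_\cO \in \Z_{\geq 0}$; the lifted-character analogue is exactly what the paper establishes in its later section on Whittaker models, and that positivity closes the degree calculation.
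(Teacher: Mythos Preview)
Your approach is the same as the paper's: express $\dim_C\pi^{P_{2n}}$ as $\meas(P_{2n})^{-1}\Theta_\pi^\dag([P_{2n}])$, apply the local character expansion, and use the scaling $\mu_\cO([\fm_F^{-2n}\fp^\perp])=q^{n\dim\cO}\mu_\cO([\fp^\perp])$ (this is exactly \eqref{eq:easyscale}). The paper's proof stops there, saying only ``the corollary then follows''; you are more explicit, and in particular you flag the non-vanishing of the leading coefficient, which the paper does not address. Your proposed fix via Corollary~\ref{cor:MW} is correct when $\cha(F)=0$: every orbit $\cO$ of maximal dimension in the wave-front set is automatically maximal for the closure order (a strictly smaller orbit in $\overline{\cO'}$ has strictly smaller dimension), so $c_\cO(\pi,\psi)=\dim\cW_{Y,\varphi,\psi}(\pi)\in\Z_{\ge 1}$, and together with $\mu_\cO([\fp^\perp])>0$ the top coefficient is positive.

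Two small remarks. First, your detour through Lagrange interpolation is unnecessary: the coefficients $c_\cO(\pi,\psi)$ lie in $\Q$ (Corollary~\ref{cor:LCE}) and $\mu_\cO([\fp^\perp])\in\Q$ (Appendix~\ref{app:harm}), so $h_P\in\Q[X]$ directly. Second, your positivity argument invokes Corollary~\ref{cor:MW}, which is stated only for $\cha(F)=0$; under the DeBacker hypotheses with $\cha(F)>0$ that route is not literally available, and indeed the paper's own one-line proof does not supply an alternative justification of the degree equality in that case either.
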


This is well-known when $\cha(C)=0$. When $\mathrm{char}(C)=\ell$ it was shown by Henniart and Vign\'{e}ras for $G=\GL_n(D)$ \cite[Theorem 1.4]{HV24} and for $G=\SL_2(F)$ \cite[Theorem 1.9]{HV25} (with mild modification) without restriction on $p$. We learned it as a conjecture in the general case from Marie-France Vign\'{e}ras.

\begin{proof} Let $c(\psi)$ be the maximal fractional ideal in $F$ such that $\ker(\psi)=\ker(\psi^{\dag})$ contains $c(\psi)$. Let $\fp$ be Lie algebra of $P$ and
\[\fp^{\perp}:=\{\chi\in\fg^*\;|\;\chi(X)\in c(\psi)\text{ for all }X\in\fp\}.\]
Then for any $n\in\Z$, the Fourier transform $\what{[\fm_F^{-n}\cdot\fp^{\perp}]}$ is the function supported on $\fm^n\cdot\fp$ with value equal to the measure of $\fm^n\cdot\fp$ (no matter how the latter is normalized), and thus for $n$ sufficiently large we have
\[
\dim_{C}\pi^{P_{2n}}=\dim_{K(C)}(\pi^{\dagger})^{P_{2n}}=\Theta^{\dagger}_{\pi}([\fm_F^{-2n}\cdot\fp^{\perp}]\circ\exp^{-1}).
\]
The corollary then follows from the fact that $\mu_{\cO}([\fm_F^{-2n}\cdot\fp^{\perp}])=q^{n\dim\cO}\cdot\mu_{\cO}([\fp^{\perp}])$ for any $n\in\Z$.
\end{proof}

\section{Degenerate Whittaker models}

In \cite{MW87}, M\oe glin and Waldspurger defined the so-called degenerate Whittaker model $\cW_{Y,\varphi,\psi_{\C}}(\pi)$ of a $\C$-representation $\pi$ that is defined in terms of\footnote{In both \cite{MW87} and \cite{Var14} an identification $\fg^*\approx\fg$ was chosen. It seems to us that it is not necessary, with some care to work in the dual Lie algebra.} an element $Y\in(\fg^*)^{\mathrm{nil}}$, a cocharacter $\varphi:\Gm\ra\bG$ that acts on $Y$ by weight $-2$, and $\psi_{\C}:F\ra\Cc$. Their definition works directly with $\psi_{\C}$ replaced by $\psi$ and $\pi$ replaced by a $C$-representation. We can therefore generalize their theorem (and also Varma's theorem \cite{Var14} when $p=2$):

\begin{corollary}\label{cor:MW} Suppose $\cha(F)=0$. Let $\psi$ be as above and $\pi$ be an irreducible admissible $C$-representation. Let $Y$ and $\varphi$ be as above such that $\cW_{Y,\varphi,\psi}(\pi)\not=0$. Then
\begin{enumerate}[label=(\roman*)]
    \item There exists $\cO\ge\Ad(G)Y$ that is maximal in $\{\cO'\in\nild\;|\;c_{\cO'}(\pi,\psi)\not=0\}$ where we consider the partial order given by closure relation.
    \item Suppose in (i) we have $\cO=\Ad(G)Y$. Then $c_{\cO}(\pi,\psi)=\dim\cW_{Y,\varphi,\psi}(\pi)\in\Z$.
\end{enumerate}
\end{corollary}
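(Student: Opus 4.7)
The plan is to reduce to the classical M\oe glin--Waldspurger and Varma theorems via an embedding $\iota: K(C) \hookrightarrow \C$, in the same spirit as the proof of Corollary \ref{cor:LCE2}. Setting $\psi_\C := \iota \circ \psi^\dag$, the $\C$-valued $G$-invariant distribution $D_\pi := \iota \circ \Theta_\pi^\dag$ on $C^\infty_c(G_{\ell'})_\C$ is admissible in the Howe--Harish-Chandra sense by Theorem \ref{thm:adm} and has a local character expansion around the identity with coefficients $\iota(c_\cO(\pi, \psi))$ by Corollary \ref{cor:LCE}. The goal is to run the M\oe glin--Waldspurger--Varma analysis on the distribution $D_\pi$ together with the datum $(Y, \varphi, \psi_\C)$.

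The key first step is to express $\dim_C \cW_{Y,\varphi,\psi}(\pi)$ as a value of $\Theta_\pi^\dag$ on a test function of $\ell'$-support. The degenerate Whittaker model is built from a unipotent subgroup $N_\varphi\subset G$ and a character defined from $(Y,\psi)$; for admissible $\pi$, it is realized as $(N,\chi_Y)$-coinvariants for a sufficiently large compact pro-$p$ subgroup $N$ inside the relevant unipotent radical (with the minor Heisenberg-type variant of \cite{Var14} when a weight-$1$ piece appears). Since $N$ is an $\ell'$-subgroup, the uniqueness of the lift applied to $(\pi|_N,\chi_Y)$ gives
\[
\dim_C \cW_{Y,\varphi,\psi}(\pi) \;=\; \Theta_\pi^\dag(e_{N, \chi_Y^\dag}) \;\in\; \Z_{\geq 0},
\]
where $e_{N,\chi_Y^\dag}\in C^\infty_c(G_{\ell'})_{W(C)}$ is the natural $W(C)$-valued idempotent built from the Teichm\"uller lift $\chi_Y^\dag:N\to W(C)^\times$. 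Applying $\iota$ then matches this integer with the Whittaker-type quantity intrinsically attached to $D_\pi$ and $\psi_\C$.

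The next step is to substitute into the local character expansion. The M\oe glin--Waldspurger--Varma Fourier computation shows that $\mu_\cO$ of the Fourier transform of $e_{N,\chi_Y^\dag}\circ\exp$ vanishes unless $\cO\ge\Ad(G)Y$, and equals $1$ under the standard normalization when $\cO=\Ad(G)Y$. Plugging into Corollary \ref{cor:LCE} yields
\[
\dim_C \cW_{Y,\varphi,\psi}(\pi) \;=\; \sum_{\cO \geq \Ad(G)Y} c_\cO(\pi,\psi)\cdot \mu_\cO\bigl(\widehat{e_{N,\chi_Y^\dag}\circ\exp}\bigr),
\]
with the $\cO=\Ad(G)Y$ term having coefficient $1$. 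Non-vanishing of the left-hand side forces some $\cO \ge \Ad(G)Y$ with $c_\cO(\pi,\psi)\ne 0$; dominating such $\cO$ by a maximal element of the finite set $\{c_{\cO'}(\pi,\psi)\ne 0\}$ (which still satisfies $\ge\Ad(G)Y$) yields (i). If in (i) one may already take $\cO=\Ad(G)Y$, then only it contributes on the right, giving $c_{\Ad(G)Y}(\pi,\psi)=\dim_C\cW_{Y,\varphi,\psi}(\pi)\in\Z$, which is (ii).

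The main obstacle is that M\oe glin--Waldspurger and Varma state and prove their results for an actual $\C$-representation and its honest character, whereas our $D_\pi$ is only an admissible $G$-invariant distribution on $G_{\ell'}$ and need not be the character of any $\C$-representation. One must therefore re-examine their arguments --- in particular, the Fourier-theoretic vanishing of $\mu_\cO$ on Whittaker test functions for $\cO\not\ge\Ad(G)Y$ and the unit normalization at $\cO=\Ad(G)Y$ --- and confirm that these steps remain valid as statements purely about admissible $G$-invariant distributions with a local character expansion, with every representation-theoretic input re-expressed in terms of identities already established for $\Theta_\pi^\dag$ in the earlier sections.
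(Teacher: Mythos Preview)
Your overall instinct---compute isotypic multiplicities via $\Theta_\pi^\dag$ on pro-$p$ subgroups and feed them into the local character expansion---matches the paper's. But the proposal has a real gap at its ``key first step.'' You assert that $\dim_C\cW_{Y,\varphi,\psi}(\pi)=\Theta_\pi^\dag(e_{N,\chi_Y^\dag})$ for a single sufficiently large compact pro-$p$ subgroup $N$ of the unipotent radical. This identity is not a formality: it is exactly the content of the stabilization argument (Varma's Lemmas 7--9, following M\oe glin--Waldspurger), which shows that the sequence of isotypic spaces $W_n$ (for the compact groups $G_n=\exp(\varpi_F^nL)$ and characters $\chi_n$) eventually has constant dimension and that this stable dimension equals $\dim\cW_{Y,\varphi,\psi}(\pi)$. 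Without those lemmas you do not know that the coinvariants over the full unipotent piece are captured by any fixed compact $N$, and the displayed equality you write down is unproved.

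Your final paragraph correctly senses an obstacle but misdiagnoses it. You propose to push everything to $\C$ via $\iota$ and then apply M\oe glin--Waldspurger--Varma to the admissible invariant distribution $D_\pi=\iota\circ\Theta_\pi^\dag$. This cannot work as stated: the stabilization lemmas are \emph{not} statements about distributions. They manipulate the underlying module $W$ directly (the maps $I'_{n,m}:W_n'\to W_m'$, the passage to the limit, the identification with $\cW$), and there is no $\C$-representation whose character is $D_\pi$ on which to run them. The paper's resolution is the opposite of passing to $\C$: it keeps the $C$-representation $(\pi,W)$ throughout and re-runs Varma's proof verbatim over $C$. The representation-theoretic lemmas go through unchanged because every integral that appears is over a pro-$p$ group, whose measure is invertible in $C$. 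The lifted character enters at exactly one point: to get $\dim_C W_n=(\meas G_n)^{-1}\Theta_\pi^\dag(\varphi_n^\dag)$, one lifts $\pi|_{G_n}$ to $K(C)$ (uniquely, since $G_n$ is an $\ell'$-group) and applies the ordinary character identity there; then Corollary~\ref{cor:LCE} gives the orbital-integral expansion of the right-hand side. Everything else is Varma's argument for the $C$-representation $\pi$.
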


The relation between degenerate Whittaker models and the local character for a mod-$\ell$ representation also was established by Henniart and Vign\'{e}ras for $G=\GL_n(D)$ \cite[\S8]{HV24} and for $G=\SL_2(F)$ \cite[Theorem 6.2]{HV25}.

\begin{proof} We mimic the proof in \cite{Var14}, and therefore indirectly follow \cite{MW87}. However, we replace the appearances of $\C$-representation by $C$-representation (except for occasionally by $K(C)$-representation, which we indicate below), replace the appearances of  $\C$-valued functions by $K(C)$-valued functions, and replace the usual character by $\Theta_{\pi}^{\dag}$. This is the whole idea of ours, but we'll try to be more precise below.

The role of the additive character $\Lambda$ in \cite{Var14} will be played both by our $\psi:F\ra C^{\times}$ and its lift $\psi^{\dag}:F\ra K(C)^{\times}$. Let $\varpi_F\in F$ be a fixed uniformizer. In \cite[\S3]{Var14} a lattice $L\subset\fg$ is carefully chosen, and $G_n:=\exp(\varpi_F^n\cdot L)$ for $n$ large. We define $\chi_n:G_n\ra C^{\times}$ as in \cite[\S4]{Var14}: One way to do it is to first define $\chi_n^{\dag}:G_n\ra K(C)^{\times}$ by pulling back back the $\Cc$-valued character in \cite[\S4]{Var14} along $\iota:K(C)\ira\C$, where $\iota\circ\psi^{\dag}:F\ra\Cc$ will serve as the additive character $\Lambda$ in \cite{Var14}. Since $G_n$ is pro-$p$, we have $\chi_n^{\dag}(G_n)\in W(C)^{\times}$ and we can define $\chi_n$ to be its reduction mod $\ell$. Alternatively, we can re-run the proof in \cite[\S4]{Var14} for mod-$\ell$ representation.

Mimicking the notation in \cite{Var14}, we denote by $W$ the underlying vector space over $C$ of $\pi$. We put
\[
\begin{array}{c}
W_n:=\{w\in W\;|\;\forall \gamma\in G_n,\;\pi(\gamma)w=\chi_n(\gamma)w\}.\\
W_n':=\{w\in W\;|\;\forall \gamma\in G_n',\;\pi(\gamma)w=\chi_n'(\gamma)w\}.
\end{array}
\]
where $G_n':=\mathrm{Int}(\varphi(\varpi^{-n}))G_n$ as in \cite[pp. 1038]{Var14} and $\chi_n'$ is likewise the conjugate of $\chi_n$. As always we choose a Haar measure on $G$ for which every pro-$p$ open compact subgroup has measure in $p^{\Z}$. These measures have well-defined invertible image in $C$, so that as {\it ibid.} we have the maps $I_{n,m}':W_n'\ra W_m'$ that is well-defined by integrating locally constant $C$-valued objects over the pro-$p$ open compact subgroup $G_m'$. All other objects until \cite[(19)]{Var14} can be defined similarly. Then, we consider the local character expansion for $(\pi,W)$ in Corollary \ref{cor:LCE}, and define $\cN_{tr}(\pi)$ similarly to be the set of maximal $\cO\in\nild$ for which $c_{\cO}(\pi,\psi)\not=0\in K(C)$. In the rest of \cite[\S5]{Var14}, we can likewise state (proofs await) analogues of \cite[Proposition 1, Lemmas 7 and 8]{Var14}; note that they are about the mod-$\ell$ representation $(\pi,W)$ and its characteristic $0$ ``local character expansion.'' We can also state \cite[Lemma 9]{Var14}, which involves some measures of pro-$p$ groups and their inverses that should be interpreted as their images in $C$. Then Corollary \ref{cor:MW} is the analogue of \cite[Theorem 1]{Var14} and follows from the analogues of \cite[Proposition 1, Lemmas 7, 8 and 9]{Var14}.

Next, we prove these analogues as in \cite[\S6]{Var14}. We similarly have \cite[Lemma 10]{Var14}, using that $V_n'$ {\it op. cit.} is a pro-$p$ group so that its measure has invertible image in $C$. The interaction between mod-$\ell$ and characteristic $0$ first happens in the Proof of the analogue of \cite[Proposition 1]{Var14}. In the proof Varma made use of \cite[(20) and (21)]{Var14}, which we reproduce (as \eqref{eq:Varma1} and \eqref{eq:Varma2}) as follows: let $\varphi_n^{\dag}\in C_c^{\infty}(G_n)_{K(C)}$ be given by $\varphi_n^{\dag}(\gamma)=\chi_n^{\dag}(\gamma)^{-1}$ for $\gamma\in G_n$. Then
\begin{equation}\label{eq:Varma1}
\dim_C W_n = (\operatorname{meas} G_n)^{-1}\Theta_{\pi}^{\dag}(\varphi_{n}^{\dag})
\end{equation}
\begin{equation}\label{eq:Varma2}
\Theta_{\pi}^{\dag}(\varphi_{n}^{\dag})=(\operatorname{meas}(\varpi^nL)^{\perp})^{-1}=\sum_{\substack{\cO\in(\fg^*)^{\mathrm{nil}}/\sim\\\cO\ge\Ad(G)Y}}c_{\cO}(\pi,\psi)\mu_{\cO}(\widehat{-\varpi^{-2n}Y+(\varpi^nL)^{\perp}}).
\end{equation}
To prove equation \eqref{eq:Varma1}, we have $\dim_C W_n =\dim_{K(C)}W_n^{\dag}$ where $W_n^{\dag}$ is defined to be a lift of $W_n$ as $G_n$-representation. Since $\Theta_{\pi}^{\dag}=\Theta_{\pi|_{G_n}^{\dag}}$, we have \eqref{eq:Varma1} just like \cite[(20)]{Var14} as a property of $\chi_n^{\dag}$. Equation \eqref{eq:Varma2} follows also just like \cite[(21)]{Var14}, except that we use Corollary \ref{cor:LCE} instead of the usual local character expansion. With these equations, the rest of the proof of the analogues of \cite[Lemmas 7, 8 and 9]{Var14} follow similarly.
\end{proof}

We end with the remark that stronger results about $\C$-coefficient characters have been developed based on \cite{De02a,De02b} in a series of works including \cite{KM03,KM06,AK07,AS09,DS18,Spi18,Spi21} and many others. In particular \cite{Spi21} achieved (under hypotheses on $p$) the description of the full character of a regular supercuspidal representation in \cite{Kal19} and consequently their endoscopic character identities are proved in \cite{FKS23}. It seems likely to us that on $G_{\ell'}$ our method will combine just well with all these results. Such a statement is however vast and have several slightly different meanings in different contexts above. We leave them to future investigations.

\subsection*{Acknowledgment} I am truly grateful to Marie-France Vign\'{e}ras for a beautiful talk, and for many very helpful discussions after that, all of which had inspired this work. I sincerely thank Rahul Dalal and Mathilde Gerbelli-Gauthier for their generous and stimulating explanation about their related upcoming paper \cite{DGGM}\footnote{I think there is no overlap of content, but it doesn't hurt to emphasize that the cited work was essentially done and explained to me before this work of mine started.}. I also thank the organizers of the wonderful conferences in the Field Institute in August 2025 and the Paris center of U. Chicago in September 2025. Lastly, I owe an immense debt of gratitude to Stephen DeBacker for teaching me about many things related to his works and for numerous other enlightening discussions that very much influence this work. 

\appendix
\section{Miscellaneous harmonic analysis}\label{app:harm}

Firstly we remark about countability of $C$.

\begin{remark}\label{rmk:count} For the sake of representation theory, we have a trick to go from arbitrary $C$ to the countable case: We are interested in an irreducible admissible $C$-representation $\pi$. Since $G/J$ is countable for any open subgroup $J\subset G$, we know $\dim_C\pi$ is countable. Choose any $C$-basis of $\pi$ and consider the countable collection of vectors $g.v$ for any $v$ in the basis and $g\in G$, written as linear combinations of the basis vectors. Then we can replace $C$ by the algebraic closure in $C$ of the subfield generated by all the coefficients. One can verify that irreducibility, etc. are unaffected by descending to this subfield.
\end{remark}

\subsection{Nilpotent orbital integrals}\label{subsec:orb} In this section, we assume either that $\cha(F)=0$, or $\bG$ is $F$-standard \cite[Definition 3]{Mc04}; this holds for example when $\cha(F)$ is very good for $\bG$. Let $e\in\fg^*$ be nilpotent and $\cO=\Ad^*(G)e$. Originally, the orbital integral
\[
\mu_{\cO}:C_c^{\infty}(\fg^*)_{\C}\ra\C
\]
is well-defined thanks to Ranga Rao \cite[Theorem 1]{Ran72} and also McNinch \cite[Theorem 45]{Mc04}. The definition involves an absolutely converging series in $\C$ (or $\R$). Once defined, the orbital integral lies in the unique $1$-dimensional space of $G$-invariant $\C$-distributions on $\fg^*$ supported on the closure of $\cO$ such that for any lattice $\Lambda\subset\fg^*$, we have 
\begin{equation}\label{eq:easyscale}
\mu_{\cO}([\fm_F^{-2}\Lambda]_{\C})=q^{\dim\cO}\cdot\mu_{\cO}([\Lambda]_{\C})
\end{equation}
We further normalize it with the measure normalized in \cite[Remark 2]{Var14}, which requires $\mu_{\cO}([J]_{\C})$ to be a specific value in $p^{\Z}$ for a specific open compact subset $J$. Choose any $\Q$-linear map $p:\C\sra\Q$ such that $p(a)=a$ for every $a\in\Q$. Then
\[
C_c^{\infty}(\fg^*)_{\Q}\ni f\mapsto p(\mu_{\cO}(f))\in\Q
\]
is a $G$-invariant $\Q$-valued distribution supported on the closure of $\cO$, satisfying \eqref{eq:easyscale}, and has the same normalization. Write $\mu_{\cO,\Q}:=p(\mu_{\cO})$. By Lemma \ref{lem:BC} we can base change $\mu_{\cO,\Q}$ from $\Q$ to $\C$. Such a base change also satisfy the above characterizing property, i.e. it has to be equal to $\mu_{\cO}$. In particular for any $f\in C_c^{\infty}(\fg^*)_{\Q}$ we have $\mu_{\cO}(f)=\mu_{\cO,\Q}(f)=p(\mu_{\cO}(f))\in\Q$. This gives a different proof of the main result of \cite{Ass96}. 

For any characteristic zero field $K$, we base change $\mu_{\cO,\Q}$ from $\Q$ to $K$ using Lemma \ref{lem:BC}. In the main body of this article we denote such base change again by $\mu_{\cO}$ when there is no danger of confusion.

\subsection{Fourier transform} Let $V$ be any finite-dimensional vector space over $F$ and $V^*=\Hom_F(V,F)$ its dual, both equipped with the topology from $F$. Let $R$ be an integral domain (commutative with identity) in which $p$ is invertible, and $\psi_R:F\ra R^{\times}$ be a non-trivial homomorphism with an open kernel. In this subsection we define Fourier transform with respect to $\psi_R$:
\[
\begin{array}{ccc}
C_c^{\infty}(V)_R&\ra&C_c^{\infty}(V^*)_R\\
f&\mapsto&\hat{f}
\end{array}
\]
In the main body of the paper $\psi_R$ will usually be the obvious choice: most of time we have $R=K(C)$, or sometimes $R=\C$, for which we take $\psi_R=\psi^{\dag}$ and $\psi_R=\psi_{\C}$, respectively.

Let $c(\psi_R)\subset F$ be the largest fractional ideal contained in $\ker(\psi_R)$. For any lattice $\Lambda\subset F$ we put
\[
\Lambda^{\perp}:=\{X\in V^*\;|\;\langle X,Y\rangle\in c(\psi_R),\;\forall Y\in\Lambda\}.
\]
Suppose we have measures on $V$ and $V^*$ for which every lattice has measure in $p^{\Z}$ and such that $\meas(\Lambda)\cdot\meas(\Lambda^{\perp})=1$ for every lattice $\Lambda$. Any $f\in C_c^{\infty}(V)_R$ is supported on some lattice $\Lambda_1$ and constant under translation by some lattice $\Lambda_0$. We can view $f$ as a function on the finite abelian $p$-group $\Lambda_1/\Lambda_0$. Then $\hat{f}\in C_c^{\infty}(V^*)_R$ will be defined as a function supported on $\Lambda_0^{\perp}$ and constant under translation by $\Lambda_1^{\perp}$. By viewing $\hat{f}$ as a function on $\Lambda_0^{\perp}/\Lambda_1^{\perp}\cong\Hom_{\cO_F}(\Lambda_1/\Lambda_0,F/c(\psi_R))$, it is defined as
\[
\hat{f}(Y)=\frac{1}{\meas(\Lambda_0)}\sum_{X\in\Lambda_1/\Lambda_0}\psi_R(\langle X, Y\rangle)\cdot f(X).
\]
Here $\meas(\Lambda_0)^{-1}$ should be understood as its image under $\Z[1/p]\ra R$. We can likewise go from $V^*$ to $V$: It is easy to see that $(\Lambda^{\perp})^{\perp}=\Lambda$, and $\hat{\hat{f}}(X)=f(-X)$. Moreover, taking Fourier transform commutes with base change for $R$.

\p
\bibliographystyle{amsalpha}

\begin{thebibliography}{DeB02b}

\bibitem[AK07]{AK07}
Jeffrey~D. Adler and Jonathan Korman, \emph{The local character expansion near a tame, semisimple element}, Amer. J. Math. \textbf{129} (2007), no.~2, 381--403. \MR{2306039 (2008a:22020)}

\bibitem[AS09]{AS09}
Jeffrey~D. Adler and Loren Spice, \emph{Supercuspidal characters of reductive {$p$}-adic groups}, Amer. J. Math. \textbf{131} (2009), no.~4, 1137--1210. \MR{2543925 (2011a:22018)}

\bibitem[Ass96]{Ass96}
Magdy Assem, \emph{A note on rationality of orbital integrals on a {$p$}-adic group}, Manuscripta Math. \textbf{89} (1996), no.~3, 267--279. \MR{1378593}

\bibitem[Dat12]{Dat12}
J.-F. Dat, \emph{Un cas simple de correspondance de {J}acquet-{L}anglands modulo {$\ell$}}, Proc. Lond. Math. Soc. (3) \textbf{104} (2012), no.~4, 690--727, With an appendix by M.-F. Vign\'eras. \MR{2908780}

\bibitem[DeB02a]{De02a}
Stephen DeBacker, \emph{Homogeneity results for invariant distributions of a reductive {$p$}-adic group}, Ann. Sci. \'Ecole Norm. Sup. (4) \textbf{35} (2002), no.~3, 391--422. \MR{1914003 (2003i:22019)}

\bibitem[DeB02b]{De02b}
\bysame, \emph{Parametrizing nilpotent orbits via {B}ruhat-{T}its theory}, Ann. of Math. (2) \textbf{156} (2002), no.~1, 295--332. \MR{1935848 (2003i:20086)}

\bibitem[DGGM]{DGGM}
Rahul Dalal, Mathilde Gerbelli-Gauthier, and Simon Marshall, \emph{to appear}.

\bibitem[DS18]{DS18}
Stephen DeBacker and Loren Spice, \emph{Stability of character sums for positive-depth, supercuspidal representations}, J. Reine Angew. Math. \textbf{742} (2018), 47--78. \MR{3849622}

\bibitem[FKS23]{FKS23}
Jessica Fintzen, Tasho Kaletha, and Loren Spice, \emph{A twisted {Y}u construction, {H}arish-{C}handra characters, and endoscopy}, Duke Math. J. \textbf{172} (2023), no.~12, 2241--2301. \MR{4654051}

\bibitem[HC99]{HC99}
Harish-Chandra, \emph{Admissible invariant distributions on reductive {$p$}-adic groups}, University Lecture Series, vol.~16, American Mathematical Society, Providence, RI, 1999, Preface and notes by Stephen DeBacker and Paul J. Sally, Jr. \MR{1702257 (2001b:22015)}

\bibitem[HV24]{HV24}
Guy Henniart and Marie-France Vign\'eras, \emph{Representations of {$GL_n(D)$} near the identity}, Proc. Lond. Math. Soc. (3) \textbf{129} (2024), no.~6, Paper No. e70000, 43. \MR{4828094}

\bibitem[HV25]{HV25}
\bysame, \emph{Representations of {${\rm SL}_2(F)$}}, Pacific J. Math. \textbf{335} (2025), no.~2, 229--286. \MR{4895805}

\bibitem[Kal19]{Kal19}
Tasho Kaletha, \emph{Regular supercuspidal representations}, J. Amer. Math. Soc. \textbf{32} (2019), no.~4, 1071--1170. \MR{4013740}

\bibitem[KM03]{KM03}
Ju-Lee Kim and Fiona Murnaghan, \emph{Character expansions and unrefined minimal {$K$}-types}, Amer. J. Math. \textbf{125} (2003), no.~6, 1199--1234. \MR{2018660 (2004k:22024)}

\bibitem[KM06]{KM06}
\bysame, \emph{K-types and {$\Gamma$}-asymptotic expansions}, J. Reine Angew. Math. \textbf{592} (2006), 189--236. \MR{2222734 (2007m:22016)}

\bibitem[McN04]{Mc04}
George~J. McNinch, \emph{Nilpotent orbits over ground fields of good characteristic}, Math. Ann. \textbf{329} (2004), no.~1, 49--85. \MR{2052869}

\bibitem[MW87]{MW87}
C.~M{\oe}glin and J.-L. Waldspurger, \emph{Mod\`eles de {W}hittaker d\'eg\'en\'er\'es pour des groupes {$p$}-adiques}, Math. Z. \textbf{196} (1987), no.~3, 427--452. \MR{913667 (89f:22024)}

\bibitem[RR72]{Ran72}
R.~Ranga~Rao, \emph{Orbital integrals in reductive groups}, Ann. of Math. (2) \textbf{96} (1972), 505--510. \MR{0320232 (47 \#8771)}

\bibitem[Ser77]{Ser77}
Jean-Pierre Serre, \emph{Linear representations of finite groups}, french ed., Graduate Texts in Mathematics, vol. Vol. 42, Springer-Verlag, New York-Heidelberg, 1977. \MR{450380}

\bibitem[Spi18]{Spi18}
Loren Spice, \emph{Explicit asymptotic expansions for tame supercuspidal characters}, Compos. Math. \textbf{154} (2018), no.~11, 2305--2378. \MR{3867302}

\bibitem[Spi21]{Spi21}
Loren Spice, \emph{Explicit asymptotic expansions in p-adic harmonic analysis {II}}, 2021, \url{https://arxiv.org/abs/2108.12935v3}.

\bibitem[Tsa25]{Tsa25b}
Cheng-Chiang Tsai, \emph{Local character expansion for mod-$\ell$ representations}, 2025.

\bibitem[Var14]{Var14}
Sandeep Varma, \emph{On a result of {M}oeglin and {W}aldspurger in residual characteristic 2}, Math. Z. \textbf{277} (2014), no.~3-4, 1027--1048. \MR{3229979}

\bibitem[Vig96]{Vig96}
Marie-France Vign\'eras, \emph{Repr\'esentations {$l$}-modulaires d'un groupe r\'eductif {$p$}-adique avec {$l\ne p$}}, Progress in Mathematics, vol. 137, Birkh\"auser Boston, Inc., Boston, MA, 1996. \MR{1395151}

\bibitem[Wal95]{Wa95}
J.-L. Waldspurger, \emph{Homog\'en\'eit\'e de certaines distributions sur les groupes {$p$}-adiques}, Inst. Hautes \'Etudes Sci. Publ. Math. (1995), no.~81, 25--72. \MR{1361755 (98f:22023)}

\end{thebibliography}
\def\cfgrv#1{\ifmmode\setbox7\hbox{$\accent"5E#1$}\else \setbox7\hbox{\accent"5E#1}\penalty 10000\relax\fi\raise 1\ht7 \hbox{\lower1.05ex\hbox to 1\wd7{\hss\accent"12\hss}}\penalty 10000 \hskip-1\wd7\penalty 10000\box7}
\providecommand{\bysame}{\leavevmode\hbox to3em{\hrulefill}\thinspace}
\providecommand{\MR}{\relax\ifhmode\unskip\space\fi MR }
\providecommand{\MRhref}[2]{%
  \href{http://www.ams.org/mathscinet-getitem?mr=#1}{#2}
}
\providecommand{\href}[2]{#2}

\end{document}